\numberwithin{equation}{subsection} %%% ajoute une numérotation des équations avec les sections %%%
\newtheorem{thm}{Theorem}[section]
\newtheorem{lem}[thm]{Lemma}
\newtheorem{cor}[thm]{Corollary}
\newtheoremstyle{bidule}% name
{3pt}% Space above
{3pt}% Space below
{}% Body font
{}% Indent amount
{\scshape}% Theorem head font
{.}% Punctuation after theorem head
{.5em}% Space after theorem head
{}% Theorem head spec (can be left empty, meaning `normal')
\newtheorem{df}[thm]{Definition}
\theoremstyle{definition}
\newtheorem{rmk}[thm]{Remark}
\newcommand{\E}{\mathscr{E}}
\newcommand{\C}{\mathcal{C}}
\newcommand{\Ar}{\text{Arr}}
\newcommand{\D}{\mathcal{D}}
\newcommand{\A}{\mathcal{A}}
\newcommand{\Aa}{\mathcal{A}}
\newcommand{\B}{\mathscr{B}}
\newcommand{\J}{\mathcal{J}} %% index category for limit and colimits%%%
\newcommand{\R}{\mathbb{R}}
\newcommand{\Cx}{\mathbb{C}}
\renewcommand{\to}{\longrightarrow}
\newcommand{\ol}{\overline}
\newcommand{\Ob}{\text{Ob}}% set of objects  
\renewcommand{\1}{\textbf{1}} %  1 of Delta
\newcommand{\tx}{\text}
\renewcommand{\to}{\longrightarrow}
\DeclareMathOperator\Id{Id}
\DeclareMathOperator\Hom{Hom}
\DeclareMathOperator\Set{\textbf{Set}} % Category of Set
\DeclareMathOperator\Cat{\mathbf{Cat}}%% Cat
\DeclareMathOperator\colim{\tx{$colim$}}%% colim
\DeclareMathOperator\Vect{\textbf{Vect}} % Category of vector spaces
\DeclareMathOperator\Top{\mathbf{Top}}
\DeclareMathOperator\Var{\mathbf{Var}}
\title{Displacements} 
\author{Hugo V. Bacard \thanks{\textit{E-mail address}: \href{mailto:hbacard@uwo.ca}{hbacard@uwo.ca}
}}
 \affil{Western University}
\date{\today}
\begin{document}
\maketitle
\begin{abstract}
Given a functor $p:\E \to \B$ and an object $e \in \E$ , we define a \emph{displacement} of $e$ along a morphism $\varepsilon: p(e) \to b$, as a map $e \to \nabla_\varepsilon(e)$ satisfying a universal property analogue to that of a \emph{cocartesian lifting} (pushforward) \emph{à la} Bénabou-Grothendieck-Street. There are many difficulties in geometry that come from the fact that forgetful functors such as $p: \Var(\Cx) \to \Top $ don't have displacements of objects along arbitrary maps.  And this can be already seen abstractly, since the existence of a left adjoint to $p$, can be reduced to the existence of all displacements of the initial object. However some \emph{schematization functors} exist as approximations. In a broader context, if $\B$ is a model category and $p$ is a right adjoint, then the right-induced model category on $\E$ exists if and only if all displacements along any trivial cofibration $\varepsilon$, are weak $p$-equivalences. In these notes we provide some categorical lemmas that will be necessary for future applications. The idea is to have a \emph{homotopy descent process} for \emph{elementary displacements} when $p$ has a \emph{presentation} as a $2$-pullback of a family $\{p_i: \E_i \to \B\}_{i\in J}$. When suitably applied it should lead to techniques similar to Mumford's GIT through homotopy theory (simplicial presheaves).
\end{abstract}
\setcounter{tocdepth}{1}
%\tableofcontents
%\newpage

\section{Motivations}
The idea of a displacement is really to move an object equipped with a \emph{structure} along an underlying map. For example if we are given a vector space $V$ and a bijection(=symmetry)  of sets $\varepsilon:p(V) \to W$, then it's easily seen that we can turn $W$ into a vector space and $\varepsilon$ lifts to an isomorphism of vector spaces. But if $\varepsilon$ is not an isomorphism, things get complicated. Solving this problem is equivalent to determine the existence of displacements, for the forgetful functor $p: \Vect \to \Set$.\\

A more interesting example is to consider the forgetful functor $p: \Var(\Cx) \to \Top$.  Because in this case, given an algebraic variety $X$ and a homeomorphism  of its underlying space of complex points $\varepsilon: X(\Cx) \to X(\Cx)$, then $\varepsilon$ hardly lifts to an algebraic map $X \to X$. Having a homeomorphism of underlying topological spaces is having a topological symmetry. But we conjectured in \cite{Bacard_Sym_QS2}, that the ``raison d'être'' of the field $\Cx$ of complex numbers, is precisely the presence of the Higgs boson. And if we are given a symmetry that forgets the complex structure, we are given a symmetry that forgets the Higgs field. And therefore it's not surprising that we cannot always lift the non-Higgs symmetry to a symmetry between complex structures (Higgs symmetry).\\

However if we are given a symmetry, or in general a morphism, that remembers the presence of the Higgs field, which is the complex structure, then  we should theoretically expect to have a lift, and this is confirmed by Serre's GAGA principle \cite{Serre_GAGA}.\\

Our motivation is that things don't work because ``they work'', but there should be a (hidden) reason why things are working. And as for complex structures, we conjectured that the main reason is the presence of the Higgs field. And as long as we acknowledge its presence by doing operations that preserve it, ``things should work''.\\

There are many concepts that are used in algebraic geometry such as \emph{Lefschetz principle} or even classical \emph{Spectral sequences}, that we always found as `magic' or mysterious. We think that the explanation for these mysteries is in set theory and its problems. The foundations of current maths are based on set theory and \emph{sets don't have symmetries} as already envisioned by Grothendieck. This fact was our motivation for writing \cite{Bacard_Sym_QS2}. For example Lefschetz principle is another way of saying that every algebraic closed field of characteristic $0$ is in the \emph{connected component} of $\Cx$. And it's reasonable to think that there is a deep meaning of this fact, if we use sets with symmetries and write $\pi_0(\Cx), \pi_0(\R)$, etc, instead. The basic constructions in most \emph{Geometries} is to build \emph{spaces} from \emph{contractible ones} (affines), by \emph{gluing or descending} them. But for a long time, we did not use the fact that \emph{contractible} \textbf{is} \emph{homotopy theory}. \\ 

Another example of displacement comes with the functor $H^0: dg\tx{-}\Cat \to \Cat.$ To\"en \cite{Toen_Morita} considered the analogue of a Gabriel-Zisman localization for a dg-category $\C$ along a subset of maps $S \subset H^0(\C)$. And this can be seen as a displacement along the usual localization functor $L: H^0(\C) \to S^{-1} H^0(C)$.\\

Similar situations occur when we modify locally the equations of an algebraic variety by some group action (symmetry). In general when we do this we usually jump from a category of rigid structures such as schemes to a much flexible category near schemes, such as \emph{(pre)stacks}.\

In algebraic topology, and therefore  in higher category, studying displacements for some functors from $(n+1)$-homotopy types to $n$-homotopy types is far from being trivial. In fact, I  remember hearing Mark Behrens who said in a talk, something like:
\begin{center}
\emph{``You think that you know about the identity functor until you start doing Goodwillie Calculus...''}
\end{center} 

And there is no reason to think he's mistaking. That is to say, studying displacements in general can be complicated.
\newpage
\tableofcontents
\ \\
 
 \ \\
 
 \ \\
\ \\

\ \\ 
 
\begin{center}
\begin{quote}
``... Je m’y adresse à toi qui me lis comme à une \textbf{personne}, et à une personne \textbf{seule}. C’est à celui en
toi qui sait être seul, à l’enfant, que je voudrais parler, et à personne d’autre. Il est loin souvent l’enfant, je
le sais bien. Il en a vu de toutes les couleurs et depuis belle lurette. Il s’est planqué Dieu sait où, et c’est pas
facile, souvent, d’arriver jusqu’à lui. On jurerait qu’il est mort depuis toujours, qu’il n’a jamais existé plutôt -
et pourtant, je suis sûr qu’il est là quelque part, et bien en vie.

Et je sais aussi quel est le \textbf{signe} que je suis entendu. C’est quand, au delà de toutes les différences de culture et de destin, ce que je dis de ma personne et de ma vie trouve en toi écho et résonance ; quand tu y
retrouves aussi ta propre vie, ta propre expérience de toi-même, sous un jour peut-être auquel tu n’avais pas
accordé attention jusque là.''

\hfill \emph{L'importance d'être seul}, Récoltes et Semailles \

\hfill Alexander Grothendieck

\end{quote}
\end{center}
\newpage
\section{Definition and properties}
\subsection{Displacement of an object}\label{section-displ}
The following definition is weaker notion of a \emph{Street opfibration}. We make no claim of introducing this definition\footnote{After all it's just a definition} .
\begin{df}
Let $p: \E \to \B$ be a functor and let $e$ be an object of $\E$. Let $\varepsilon: p(e) \to b$ be a morphism in $\B$ and let $p^{\ast}$ be the induced functor between the comma categories:
$$ p^{\ast}:(e \downarrow \E) \to (p(e) \downarrow \B).$$

A \emph{displacement of $e$ along $p(e) \xrightarrow{\varepsilon} b$} is an object $e \to \nabla_\varepsilon(e)$ of $(e \downarrow \E)$ that corepresents the functor
$$\Hom(\varepsilon,-) : (e \downarrow \E) \to \Set.$$
This functor takes $h:e \to e'$ to the hom-set $\Hom(\varepsilon, p(h))$ of $(p(e)\downarrow \B)$. In other words a displacement is just an \emph{adjoint-transpose} of $\varepsilon$ through $p^\star$.\\ 

In particular a displacement along any $\varepsilon: p(e) \to b$ exists if and only if $p^{\ast}$ has a left adjoint.
\end{df}

\begin{rmk}\label{rmk-ppty-disp}
\begin{enumerate}
\item If a displacement $e \to \nabla_\varepsilon(e)$  along $p(e) \xrightarrow{\varepsilon} b$ exists, then there is a universal map $b \to p(\nabla_\varepsilon(e))$ (the unit of the adjunction), such that the map $p(e \to \nabla_\varepsilon(e))$ is the following composite.
\begin{equation}\label{univ-factor}
p(e) \to b \to p(\nabla_\varepsilon(e)).
\end{equation}
\item If for every $e$, there is a displacement along any $p(e) \to b$, such that the universal map $b \to p(\nabla_\varepsilon(e))$ is an isomorphism; then $p$ is a Street opfibration. In that case the map $e \to \nabla_\varepsilon(e)$ is a pseudo cocartesian lifting of $\varepsilon:p(e) \to b$.
\item If both $\E$ and $\B$ have initial objects  $e_0$ and $b_0$, respectively; it's well known that we have an equivalence of categories $(e_0 \downarrow \E)  \simeq  \E$ and and equivalence $(b_0 \downarrow \B ) \simeq  \B$. Therefore  if $p(e_0)=b_0$ then $p$ has a left adjoint if and only if all displacements of $e_0$ exists.
\item Let $\1=\{0, \Id_0\}$ be the unit category and let $\1< \E$ be the \emph{join category}: there is exactly one morphism from $0$ to any object in $\E$ and no morphism whose target is $0$ except the identity $\Id_0$. So roughly speaking we're adding an initial object $0$. Any functor $p: \E \to \B$ induces a functor $$(\1 < p): (\1< \E) \to (\1< \B)$$
that restricts to $p$ on $\E$. 

It's not hard to see that $p$ has a left adjoint if and only if $(\1<p)$ has one. This breaks down problems on an existence of an adjoint to problems on existence of displacements of initial object. 
\end{enumerate}
\end{rmk}

\subsection{ Join constructions and Pseudopullbacks} 
Let $\{p_j: \E_j \to \B \}_{j \in \J}$ be a family of functors over the same base $\B$.  Let $\E= \times_{\B} \E_j$ be a \emph{pseudopullback} (also called  $2$-pullback) of this family in $\Cat$ and let $\tau_j: \E \to \E_j$ be the canonical projection.  We refer the reader to \cite{Joyal_Street_pspb} for the definition of a pseudopullback. We remind the reader that we can take as model for $\E$ the category described as follows.
\begin{enumerate}
\item The objects of $\E$ are cone of isomorphisms $\{b \xrightarrow{\cong} p_j(e_j); e_j \in \E_j  \}_{j \in \J}$;
\item A morphism $\sigma:\{b \xrightarrow{\cong} p_j(e_j); e_j \in \E_j  \}_{j \in \J} \to \{c \xrightarrow{\cong} p_j(f_j); f_j \in \E_j  \}_{j \in \J} $
consists of a  morphism $\sigma:b \to c$ and a family of morphisms $\{\sigma_j; e_j \to f_j \}_{j \in \J}$ such that for each $j$ the following commutes.

\[
\xy
(0,20)*+{b}="X";
(20,20)*+{p_j(e_j)}="Y";
(0,0)*+{c}="A";
(20,0)*+{p_j(f_j)}="B";
{\ar@{->}^-{\cong}"X";"Y"};
{\ar@{->}_-{\cong}"A";"B"};
{\ar@{->}^-{p_j(\sigma_j)}"Y";"B"};
{\ar@{->}_-{\sigma}"X";"A"};
\endxy
 \]
\end{enumerate}
In general there is no canonical map $p: \E \to \B$ but a family of naturally isomorphic functors  $p_j \circ \tau_j$. We will assume that a choice $p: \E \to \B$ has been made once and for all. The advantage of working with the above model is that there is a canonical projection $p: \E \to \B$ that takes $\{b \xrightarrow{\cong} p_j(e_j); e_j \in \E_j  \}_{j \in \J}$ to $b$.

\begin{lem}
Let $\A$ be a category and let $(1<\B)$ be the join category described above. Then a functor $F:\A \to (1< \B)$ is completely determined by the following data.
\begin{itemize}
\item Two full subcategories $\A_{-}$ and $\A_+$ of $\A$ with $\Ob(\A)= \Ob(\A_-) \sqcup \Ob(\A_+)$; and such that there is no morphism $a_+ \to a_-$ in $\A$ with $a_+ \in \A_+$ and $a_- \in \A_-$.
\item A functor $F_+:\A_+ \to \B$ such that the diagram hereafter is a pseudopullback.
\[
\xy
(0,20)*+{\A_+}="A";
(20,20)*+{\A}="B";
(0,0)*+{\B}="C";
(20,0)*+{(1<\B)}="D";
{\ar@{->}^{}"A";"B"};
{\ar@{->}_{F_+}"A";"C"};
{\ar@{->}^{F}"B";"D"};
{\ar@{->}^{}"C";"D"};
\endxy
\]  
\end{itemize} 
\end{lem}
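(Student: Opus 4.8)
The plan is to prove the statement as a bijective correspondence between functors $F:\A \to (1<\B)$ and the listed data, by describing both passages and checking they are mutually inverse. The whole argument is driven by the two defining features of the join category: $0$ is initial in $(1<\B)$, and no non-identity arrow has target $0$. I will keep these facts in front at every step, since they are exactly what forces all the choices and makes the pseudopullback condition hold.

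First I would extract the data from a given $F$. Let $\A_-$ be the full subcategory on the objects $a$ with $F(a)=0$ and $\A_+$ the full subcategory on those with $F(a)\in\B$; since every object of $(1<\B)$ is either $0$ or lies in $\B$, this produces the partition $\Ob(\A)=\Ob(\A_-)\sqcup\Ob(\A_+)$. The no-backward-morphism condition is the key point of this half: an arrow $a_+\to a_-$ would map under $F$ to an arrow $F(a_+)\to 0$ with $F(a_+)\in\B$, but the only arrow of $(1<\B)$ with target $0$ is $\Id_0$, so no such $a_+\to a_-$ exists. Restricting $F$ to $\A_+$ lands in $\B$, because an arrow between two objects of $\B$ taken inside $(1<\B)$ is just an arrow of $\B$; this gives $F_+:=F|_{\A_+}$.

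Next I would verify the pseudopullback condition by showing the comparison functor $\A_+\to P$ into the pseudopullback $P$ of $\B \hookrightarrow (1<\B) \xleftarrow{F} \A$, sending $a\mapsto (F_+(a),a,\Id)$, is an equivalence. An object of $P$ is a triple $(b,a,\phi)$ with $\phi:b\xrightarrow{\cong}F(a)$; since $0$ receives no non-identity arrow it is isomorphic to no $b\in\B$, so $F(a)$ must lie in $\B$, i.e.\ $a\in\A_+$, and $\phi$ is an isomorphism of $\B$. Essential surjectivity then comes from the isomorphism $(\phi,\Id_a):(F_+(a),a,\Id)\xrightarrow{\cong}(b,a,\phi)$, and full faithfulness from the observation that a morphism $(\beta,\alpha)$ of $P$ out of an object with $\phi=\Id$ forces $\beta=F_+(\alpha)$, so that morphisms of $P$ between such objects correspond bijectively to morphisms of $\A_+$.

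Conversely, from the data I would rebuild $F$ with no freedom: set $F(a)=0$ on $\A_-$ and $F(a)=F_+(a)$ on $\A_+$; send every arrow inside $\A_-$ to $\Id_0$, every arrow of $\A_+$ to its $F_+$-image, and every mixed arrow $a_-\to a_+$ (by the partition these are the only mixed arrows) to the unique morphism $0\to F_+(a_+)$ supplied by initiality of $0$. Functoriality reduces to a short case analysis over the location of the source, middle and target of a composable pair: the forbidden direction $\A_+\to\A_-$ deletes every awkward case, and any case landing in a mixed hom-set is automatic because that hom-set is a singleton. The only real work left—and the step I expect to be fussiest—is the bookkeeping that the two passages are inverse: extracting data from the rebuilt $F$ returns $(\A_-,\A_+,F_+)$ since $F(a)=0\iff a\in\A_-$ by construction, and rebuilding $F$ from data extracted from a given $F$ returns that same $F$ because the reconstruction only ever made choices into which $F$ was already forced. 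I anticipate no genuine obstacle beyond this, all potential friction being absorbed by the initiality of $0$ and the absence of arrows into it.
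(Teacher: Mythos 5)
Your proposal is correct and follows essentially the same route as the paper: extract $\A_-$ and $\A_+$ as $F^{-1}(0)$ and its complement, use the fact that $\Id_0$ is the only arrow of $(1<\B)$ with target $0$ to exclude morphisms $a_+ \to a_-$, and reconstruct $F$ from the data via the initiality of $0$. You additionally verify the pseudopullback condition explicitly through the comparison functor into the pseudopullback of $\B \to (1<\B) \leftarrow \A$, a step the paper's own proof leaves implicit, so if anything your argument is the more complete of the two.
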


\begin{proof}
Given $F:\A \to (1< \B)$, we let $\A_-$ be the full subcategory whose objects are the elements of $F^{-1}(0)$. And we let $\A_+$ be the full subcategory of $\A$ whose set of objects is the complementary of $\Ob(\A_-)$ in $\Ob(\A)$. By construction we have  $\Ob(\A)= \Ob(\A_-) \sqcup \Ob(\A_+)$. Furthermore since there are no morphisms in $\B$ whose target is $0$ except $\Id_0$, we cannot have a map $a_+ \to a_-$ because then we will have a function $\A(a_+,a_-) \to \emptyset$ whose target is the empty set but the source is not, which is impossible.\\

Conversely given $, \A_-,\A_+$ and $F_+$ we define $F$ as follows. $F$ is constant of value $0$ on $\A_-$ and is equal to $F_+$ on $\A_+$. For any map $a_- \to a_+$ in $\A$ we let $F(a_- \to a_+)$ be the unique map $0 \to F_+(a_+)$ in $(1<\B)$. One can check that this defines indeed a functor $F: \A \to (1<\B)$ whose restriction to $\B$ is (by construction)  $F_+$.
\end{proof}

A direct consequence of the lemma is:
\begin{cor}\label{cor-join-pseudo}
The functor $(1<-): \Cat \to \Cat$ preserves pseudopullbacks. 
\end{cor}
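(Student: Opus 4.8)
The plan is to exhibit the canonical comparison functor and to prove it is an equivalence by testing the universal property of the pseudopullback against an arbitrary category $\A$, transporting all of the data through the preceding Lemma. Write $\E=\times_{\B}\E_j$ for the given pseudopullback, with projections $\tau_j:\E\to\E_j$ and the chosen $p:\E\to\B$. Applying $(1<-)$ produces functors $(1<\tau_j):(1<\E)\to(1<\E_j)$ and $(1<p_j):(1<\E_j)\to(1<\B)$, and since $(1<-)$ carries isomorphisms to isomorphisms and fixes the adjoined object, the natural isomorphisms $p\cong p_j\circ\tau_j$ are sent to natural isomorphisms $(1<p)\cong(1<p_j)\circ(1<\tau_j)$. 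Thus $(1<\E)$ is a pseudocone over the family $\{(1<p_j):(1<\E_j)\to(1<\B)\}_{j\in\J}$ and induces a comparison functor $(1<\E)\to\times_{(1<\B)}(1<\E_j)$; it remains to see this is an equivalence.

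So I would test against an arbitrary $\A$. By the chosen model of the pseudopullback, a pseudocone from $\A$ into the family $\{(1<p_j)\}$ is the data of a functor $\gamma:\A\to(1<\B)$ together with functors $H_j:\A\to(1<\E_j)$ and natural isomorphisms $\gamma\cong(1<p_j)\circ H_j$. Apply the Lemma to $\gamma$: it is determined by the full subcategories $\A_-=\gamma^{-1}(0)$ and $\A_+$ together with a functor $\gamma_+:\A_+\to\B$. The crucial observation is that every $H_j$ detects the adjoined initial object through the \emph{same} locus $\A_-$: since $(1<p_j)$ sends $0\in(1<\E_j)$ to $0\in(1<\B)$ and every genuine object $e_j$ to the genuine object $p_j(e_j)$, we have $H_j(a)=0\iff(1<p_j)H_j(a)=0$; and because $0$ is the unique initial object of $(1<\B)$, any object isomorphic to it equals it, so the isomorphism $\gamma\cong(1<p_j)H_j$ gives $(1<p_j)H_j(a)=0\iff\gamma(a)=0$. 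Hence $H_j^{-1}(0)=\gamma^{-1}(0)=\A_-$ for every $j$, and each $H_j$ restricts to a genuine functor $H_{+,j}:\A_+\to\E_j$.

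It then remains to assemble the pieces. On $\A_+$ every functor lands in the genuine part, so the natural isomorphisms restrict to natural isomorphisms $\gamma_+\cong p_j\circ H_{+,j}$; this is precisely a pseudocone over the original family $\{p_j:\E_j\to\B\}$, and by the universal property of $\E=\times_{\B}\E_j$ it corresponds to a functor $F_+:\A_+\to\E$ with $p\circ F_+\cong\gamma_+$. The triple $(\A_-,\A_+,F_+)$ meets the hypotheses of the Lemma with base $\E$ — the absence of morphisms $a_+\to a_-$ holds because such a map would push forward under any $H_j$ to a nonidentity arrow into the initial object $0\in(1<\E_j)$ — so it determines a functor $F:\A\to(1<\E)$. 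The two passages, one sending a functor $\A\to(1<\E)$ to a pseudocone by composing with the comparison functor and the other sending a pseudocone to such an $F$, are mutually inverse and natural in $\A$; this is the universal property, so the comparison functor is an equivalence and $(1<-)$ preserves the pseudopullback.

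The step I expect to be the main obstacle is not this object-level matching but upgrading it to an equivalence of functor categories, that is, checking that the correspondence is compatible with natural transformations of pseudocones and that the determination furnished by the Lemma is itself functorial in $\A$; concretely one must see that a transformation of pseudocones automatically respects the common decomposition $\Ob(\A)=\Ob(\A_-)\sqcup\Ob(\A_+)$ and restricts to a transformation of pseudocones over $\{p_j\}$, where the ordinary universal property of $\E$ can again be invoked. The only genuinely delicate ingredient is the rigidity of the adjoined object — that $(1<p_j)$ preserves and reflects it and that isomorphisms cannot move it — which is exactly what forces the loci $\A_-$ to coincide across $j$ and makes the descent go through.
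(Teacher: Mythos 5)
Your proof is correct and takes the same route the paper intends: the paper offers no argument beyond calling the corollary ``a direct consequence of the lemma,'' and your write-up is precisely that deduction spelled out --- decomposing a pseudocone over $\{(1<p_j)\}$ via the Lemma, using the rigidity of the adjoined object $0$ (nothing else is isomorphic to it, since the only arrow into $0$ is $\Id_0$) to see that all the loci $H_j^{-1}(0)$ coincide with $\gamma^{-1}(0)$, and reassembling over $\A_+$ through the universal property of $\times_{\B}\E_j$. Your closing caveat about checking the correspondence on natural transformations of pseudocones is the right thing to be careful about, and it goes through for the reason you indicate.
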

\begin{rmk}
\begin{enumerate}
\item According to the notation of the lemma, if we take $\A= (1< \B)$ and $F= \Id$ then we may write $\B= (1< \B)_+$ and $\1= (1< \B)_-$.
\item When we have such functor $F:\A \to (1< \B)$ , we will say that $F$ is a \emph{one way bridge from $\A_-$ to $\A_+$}.
\item Observe that the homset $\A(a_-,a_+)$ defines a bimodule $\A_- \times \A_+^{op} \to \Set$.
\end{enumerate}
\end{rmk}

\section{Left perfectness and Descent for displacements}
\begin{df}
Let $\B$ be a category containing two classes of morphisms called \emph{cofibrations} and \emph{trivial cofibrations}, each of them closed under transfinite composition and cobase change. Let  $p: \E \to \B$ be as above and let $e$ be an object of $\E$.

\begin{enumerate}
\item Say that $p$ is \emph{left perfect at $e$} if for any (trivial) cofibration $\varepsilon: p(e) \to b$ in $\B$, the universal map $b \to p(\nabla_\varepsilon(e))$ is also a (trivial) cofibration for a displacement $e \to \nabla_\varepsilon(e) $ of $e$ along $\varepsilon$.
\item Say that $p$ is left perfect if it's left perfect at any object $e \in \E$.
\item Say that a map $f:e \to e'$ in $\E$ is a (trivial) $p$-cofibration if $p(f)$ is a (trivial) cofibration in $\B$
\end{enumerate} 

\end{df}

\begin{rmk}
Thanks to the universal factorization \eqref{univ-factor} in Remark \ref{rmk-ppty-disp}, it's not hard to see that  if $p$ is left perfect at $e$, then for any displacement  $\eta: e \to \nabla_\varepsilon(e)$, $p(\eta)$ is a (trivial) cofibration if $\varepsilon$ is a (trivial) cofibration. In other words $\eta$ is a (trivial) $p$-cofibration if $\varepsilon$ is a (trivial) cofibration in $\B$.
\end{rmk}
\paragraph{Crossing Lemma}
Let $\B$ be a category and let $\lambda$ and $\kappa$ be two infinite regular cardinals with $\lambda < \kappa$. Assume that $\B$ has all $\kappa$-small colimits \footnote{ In most cases we will assume also that $\B$ is locally $\lambda$-presentable (hence locally $\kappa$-presentable)}. Let's start with the following lemma which is a tautology. We mention it because it appears many times in the upcoming constructions.
\begin{lem}[\emph{Crossing lemma}]\label{cross-lem}
Let $C: \lambda \to \B$ and $D: \lambda \to \B$ be two directed diagrams in $\B$. Assume that for every $k \in \lambda$ there exists two maps $\eta_k: C_k \to D_k$ and  $\varepsilon_k: D_k \to C_{k+1}$ such that the structure maps of $C$ and $D$ are respectively the composite below.
 $$C_k \to C_{k+1}= C_k \xrightarrow{\eta_k} D_k \xrightarrow{\varepsilon_k} C_{k+1}$$ 
 $$D_k \to D_{k+1}= D_k \xrightarrow{\varepsilon_k} C_{k+1} \xrightarrow{\eta_{k+1}} D_{k+1}$$

Then $C$ and $D$ have isomorphic colimits and the maps between the colimits that are induced by $\varepsilon_k$ and $\eta_k$ are inverse each other.
\end{lem}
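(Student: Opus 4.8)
The plan is to build the two comparison maps between the colimits directly from the universal property and then verify they are mutually inverse by testing the defining cocones, so that the whole argument is a bookkeeping computation as the epithet \emph{tautology} suggests. Write $c_k : C_k \to \colim C$ and $d_k : D_k \to \colim D$ for the canonical maps of the two colimit cocones. First I would assemble the family $\{\, d_k \circ \eta_k : C_k \to \colim D \,\}_{k \in \lambda}$ and the family $\{\, c_{k+1} \circ \varepsilon_k : D_k \to \colim C \,\}_{k \in \lambda}$. Once these are seen to be cocones under $C$ and under $D$ respectively, the universal property produces unique maps $\bar{\eta} : \colim C \to \colim D$ and $\bar{\varepsilon} : \colim D \to \colim C$ determined by $\bar{\eta} \circ c_k = d_k \circ \eta_k$ and $\bar{\varepsilon} \circ d_k = c_{k+1} \circ \varepsilon_k$.

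The cocone conditions are exactly where the two factorization hypotheses are used. For the successor transition $C_k \to C_{k+1}$, which by assumption is $\varepsilon_k \circ \eta_k$, one computes $(d_{k+1} \circ \eta_{k+1}) \circ (\varepsilon_k \circ \eta_k) = (d_{k+1} \circ \eta_{k+1} \circ \varepsilon_k) \circ \eta_k = d_k \circ \eta_k$, using that $\eta_{k+1} \circ \varepsilon_k$ is the structure map $D_k \to D_{k+1}$ and that $d$ is a cocone; the dual computation with $\eta_{k+1} \circ \varepsilon_k$ settles the family built from $\varepsilon_k$. This establishes compatibility across every successor step. The remaining bookkeeping is at a limit ordinal $\ell < \lambda$, where I must know that $\eta$ and the shifted $\varepsilon$ respect the structure maps $C_k \to C_\ell$ and $D_k \to D_\ell$. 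I expect this to be the only point in the argument that is not purely mechanical: it is vacuous when $\lambda = \omega$ (there are no such $\ell$), and more generally it holds once the diagrams are continuous at limit stages, so that $\eta_\ell$ and $\varepsilon_\ell$ are the induced maps and naturality there is inherited from below.

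With $\bar{\eta}$ and $\bar{\varepsilon}$ in hand, I would finish by evaluating the two composites on the generating objects. On $C_k$ we get $\bar{\varepsilon} \circ \bar{\eta} \circ c_k = \bar{\varepsilon} \circ (d_k \circ \eta_k) = (c_{k+1} \circ \varepsilon_k) \circ \eta_k = c_{k+1} \circ (\varepsilon_k \circ \eta_k) = c_k$, the last equality because $\varepsilon_k \circ \eta_k$ is the structure map $C_k \to C_{k+1}$ and $c$ is a cocone; hence $\bar{\varepsilon} \circ \bar{\eta}$ and $\Id_{\colim C}$ agree on the universal cocone $\{c_k\}$ and are therefore equal. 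Symmetrically, on $D_k$, $\bar{\eta} \circ \bar{\varepsilon} \circ d_k = \bar{\eta} \circ (c_{k+1} \circ \varepsilon_k) = (d_{k+1} \circ \eta_{k+1}) \circ \varepsilon_k = d_{k+1} \circ (\eta_{k+1} \circ \varepsilon_k) = d_k$, so $\bar{\eta} \circ \bar{\varepsilon} = \Id_{\colim D}$. Thus $\bar{\eta}$ and $\bar{\varepsilon}$ are inverse isomorphisms induced by the $\eta_k$ and $\varepsilon_k$, which is precisely the assertion.
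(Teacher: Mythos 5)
Your proof is correct and follows essentially the same route as the paper's: construct the two induced maps $\bar{\eta}$ and $\bar{\varepsilon}$ between the colimits from the crossing factorizations, check that both composites fix every leg of the universal cocone, and conclude by the uniqueness clause of the colimit's universal property. The only difference is that you spell out the cocone verification (including the caveat at limit ordinals $\ell<\lambda$, which the paper silently elides by simply asserting the induced maps exist); this is a reasonable extra precaution rather than a different argument.
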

\begin{proof}
Let $C_\infty$ and $D_\infty$ be the corresponding colimits and let $i_k: C_k \to C_\infty$ and $j_k: D_k \to D_\infty$ be the canonical maps. Denote by $\varepsilon_\infty: D_\infty \to C_\infty$ and $\eta_\infty: C_\infty \to D_\infty$ the universal maps induced by the maps $\varepsilon_k$ and $\eta_k$.\\

We have the following equality for each $k$.
$$D_k \xrightarrow{\varepsilon_{k}}C_{k+1} \xrightarrow{i_{k+1}} C_\infty= D_k\xrightarrow{j_k} D_\infty \xrightarrow{\varepsilon_\infty} C_\infty ;$$
$$C_k \xrightarrow{\eta_{k}}D_k \xrightarrow{j_k} D_\infty= C_k \xrightarrow{i_k} C_\infty \xrightarrow{\eta_\infty} D_\infty.$$

If we precompose by $\eta_k: C_k \to D_k$ in the first equality and then use the second equality we see that we have $i_k= (\varepsilon_\infty \circ \eta_\infty) \circ i_k$.  Similarly if we precompose by $\varepsilon_{k-1}: D_{k-1} \to C_k $ in the second equality and then use the first equality we get $j_{k-1}= (\eta_\infty \circ \varepsilon_\infty) \circ j_{k-1}$.\\

But on the other hand, by definition of a colimit, the only endomorphism $f \in \Hom(C_\infty, C_\infty)$ such that $f \circ i_k= i_k$ for all $k$ is the identity $\Id_{C_\infty}$. The same holds for $D_\infty$ with the maps $j_k$. This forces the two equalities $\eta_\infty \circ \varepsilon_\infty= \Id$ and $\varepsilon_\infty \circ \eta_\infty=\Id$ and the lemma follows.
\end{proof}

\begin{lem}\label{lem-displ-intersec}
Let $\kappa$ be a regular cardinal and let $\{p_j: \E_j \to \B \}_{j \in \J}$ be a $\kappa$-small family of functors over the same base $\B$.  Let $\E= \times_{\B} \E_j$ be a $2$-pullback of this family and let $p: \E \to \B$ be `the' canonical projection.

Assume that
\begin{itemize}
\item For every $e_j \in \E_j$ all displacements of $e_j$ exist;
\item Every $p_j$ creates (and hence preserves) filtered colimits in $\E_j$. 
\item $\B$ is closed under $\kappa$-small filtered colimits.
\item Assume furthermore that $\B$ is closed under $\kappa$-small wide pushouts.
\end{itemize}

Then for every $e\in \E$, all displacements of $e$ exist. Furthermore if $\B$ is a category with two classes of maps called cofibrations and trivial cofibrations each of them closed under transfinite composition and cobase change, and if each $p_j: \E_j \to \B$ is left perfect then $p: \E \to \B$ is left perfect.
\end{lem}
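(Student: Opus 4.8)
The plan is to construct the displacement of $e=\{b\xrightarrow{\cong}p_j(e_j)\}_{j\in\J}$ along $\varepsilon\colon b\to c$ by an interleaved process of length $\omega$, alternating the displacements that exist componentwise in each $\E_j$ with $\kappa$-small wide pushouts in $\B$, and then to read off both assertions from this single construction. First I would set $c_0=c$ and, transporting $\varepsilon$ across the structural isomorphism $b\cong p_j(e_j)$, obtain a map $w_j^{(0)}\colon p_j(e_j)\to c_0$; displacing $e_j$ along it produces an object $e_j^{(1)}$ together with the universal map $u_j^{(0)}\colon c_0\to p_j(e_j^{(1)})$ of \eqref{univ-factor}. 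Since the resulting cone $\{c_0\xrightarrow{u_j^{(0)}}p_j(e_j^{(1)})\}_j$ need not consist of isomorphisms, I recombine the components by letting $c_1$ be its wide pushout (which exists, $\J$ being $\kappa$-small), with leg inclusions $w_j^{(1)}\colon p_j(e_j^{(1)})\to c_1$. Iterating — displace $e_j^{(n)}$ along $w_j^{(n)}$, then take $c_{n+1}$ to be the wide pushout of the universal maps $u_j^{(n)}\colon c_n\to p_j(e_j^{(n+1)})$ — yields a chain $c_0\to c_1\to\cdots$ in $\B$ and, for each $j$, a chain $e_j^{(0)}\to e_j^{(1)}\to\cdots$ in $\E_j$. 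By construction of $c_{n+1}$ as a wide pushout the structure map satisfies $c_n\to c_{n+1}=w_j^{(n+1)}\circ u_j^{(n)}$ for every $j$, while \eqref{univ-factor} gives $p_j(e_j^{(n)}\to e_j^{(n+1)})=u_j^{(n)}\circ w_j^{(n)}$; this is exactly the hypothesis of the Crossing Lemma~\ref{cross-lem} applied to $C=(c_n)_n$ and $D_j=(p_j(e_j^{(n+1)}))_n$, so $\colim_n c_n\cong\colim_n p_j(e_j^{(n)})$ for every $j$.

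Next I would pass to the colimit. Because each $p_j$ creates $\kappa$-small filtered colimits, the chain $(e_j^{(n)})_n$ has a colimit $E_j\in\E_j$ with $p_j(E_j)=\colim_n p_j(e_j^{(n)})$, and the Crossing Lemma isomorphisms assemble into a cone $\{c_\infty\xrightarrow{\cong}p_j(E_j)\}_j$, where $c_\infty:=\colim_n c_n$; this is the candidate $\nabla_\varepsilon(e)$, equipped with the map $e\to\nabla_\varepsilon(e)$ having components $e_j\to E_j$ and base $b\xrightarrow{\varepsilon}c_0\to c_\infty$, together with the universal map $c\to c_\infty$. To see that it corepresents $\Hom(\varepsilon,-)$, I take any $h\colon e\to e'$ with $e'=\{d\xrightarrow{\cong}p_j(f_j)\}_j$ and any factorization $g\colon c\to d$ of $p(h)$ through $\varepsilon$, and build a map $\nabla_\varepsilon(e)\to e'$ under $e$ stage by stage: at each level the universal property of the displacement of $e_j^{(n)}$ converts the current base map into a unique map $e_j^{(n+1)}\to f_j$, whose underlying maps agree along $c_n$ and therefore, by the universal property of the wide pushout, descend to a unique map $c_{n+1}\to d$ feeding the next stage. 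Passing to the colimit yields the desired morphism, and the uniqueness at each stage shows this assignment is inverse to the restriction $w\mapsto p(w)\circ(c\to c_\infty)$, establishing the first assertion.

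For left perfectness, assume $\varepsilon$ is a (trivial) cofibration. The universal map $c\to c_\infty=p(\nabla_\varepsilon(e))$ is the transfinite composite $c_0\to c_1\to\cdots$, so by closure under transfinite composition it suffices to prove each $c_n\to c_{n+1}$ is a (trivial) cofibration. I argue by induction that every map fed into a displacement is a (trivial) cofibration: the stage-$0$ map $w_j^{(0)}$ is $\varepsilon$ up to isomorphism, hence a (trivial) cofibration, and left perfectness of $p_j$ then forces $u_j^{(n)}$ to be a (trivial) cofibration whenever $w_j^{(n)}$ is. A $\kappa$-small wide pushout of such maps is built as an iterated sequence of cobase changes, so both the structure map $c_n\to c_{n+1}$ and each leg inclusion $w_j^{(n+1)}$ are (trivial) cofibrations, by closure under cobase change and transfinite composition; the latter feeds the inductive step. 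Hence $c\to c_\infty$ is a transfinite composite of (trivial) cofibrations and $p$ is left perfect.

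I expect the main obstacle to lie in the second paragraph: verifying that the interleaved colimit genuinely is the displacement. The delicate points are that the limiting cone must consist of \emph{isomorphisms} — which is precisely where the Crossing Lemma and the creation of filtered colimits by the $p_j$ are indispensable — and the bookkeeping required to check that the stage-by-stage maps are compatible with the structural cone isomorphisms, so that they assemble into honest morphisms of the $2$-pullback $\E$ rather than merely of the underlying components $\E_j$.
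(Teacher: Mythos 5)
Your proposal follows essentially the same route as the paper's proof: the same interleaved induction (componentwise displacements producing universal maps into the base, recombined by $\kappa$-small wide pushouts), the same appeal to the Crossing Lemma and to creation of filtered colimits to obtain a cone of isomorphisms defining an object of the pseudopullback, the same stage-by-stage verification of the universal property, and the same transfinite-composition/cobase-change argument for left perfectness. The only cosmetic difference is that you run the induction over $\omega$ where the paper uses an auxiliary regular cardinal $\lambda<\kappa$; the substance is identical.
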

\begin{rmk}
In practice we will use the lemma  when $\B$ and every $\E_j$ have an initial object and every $p_j$ sends initial object to initial object.
\end{rmk}
\begin{proof}

Let $\tau_j: \E \to \E_j$, $j \in \J$ be the universal family of functors. Let $e$ be an object of $\E$ and let $\varepsilon: p(e) \to b$ be a morphism in $\B$. Recall that for every $j$ there is an isomorphism $p_j\tau_j \cong p$; in particular there is a morphism $\varepsilon_j: p_j \tau_j(e) \to b$ which is isomorphic to $\varepsilon$ as objects of $(\B \downarrow b)$.\\

Let $\lambda$ be another regular cardinal with $\lambda < \kappa$. We are going to construct inductively and simultaneously for all $j$, a family  $\lambda$-directed diagrams $\{ e_j^\bullet: \lambda \to \E_j\}_{j \in \J}$ and $b^\bullet: \lambda \to \B$.

\begin{enumerate}
\item Let $b^0=b$,  $e_j^0=\tau_j(e)$ and let $\varepsilon_j^0: p_j(e_j^0) \to b^0$ be the above map $\varepsilon_j: p_j \tau_j(e) \to b$ (that is isomorphic to $\varepsilon$).
\item For each $j$, we define the structure map $\eta_j^k:e_j^{k} \to  e_j^{k+1}$ of the diagram $e_j^\bullet: \lambda \to \E_j$ as the displacement of $e_j^{k}$ along the (already existing) map $$\varepsilon_j^{k}: p_j(e_j^k) \to b^k.$$
This means that  $e_j^{k+1}\cong \nabla_{\varepsilon_j^{k}}(e_j^k)$.
\item  Following Remark \ref{rmk-ppty-disp}, there is a universal map $\delta_j^k: b^k \to p_j(e_j^{k+1})$ such that we have an equality
$$p_j(e_j^{k}) \xrightarrow{p_j(\eta_j^k)}  p_j(e_j^{k+1})=   p_j(e^k) \xrightarrow{\varepsilon_j^{k}} b^k \xrightarrow{\delta_j^k} p_j(e_j^{k+1}).$$
\item Let $b^{k+1} \in \B$ be the colimit of the wide pushout data $\{b^k \xrightarrow{\delta_j^k} p_j(e_j^{k+1}) \}$. 
\item We define the structure map $\iota^k: b^k \to b^{k+1}$ of the diagram $b^\bullet: \lambda \to \B$ as the canonical map going to the colimit of the wide pushout data.
\item By construction, for every $j$ there is also a canonical map $\varepsilon_j^{k+1}: p_j(e_j^{k+1}) \to b^{k+1}$ and we have the following equality.
$$b^k \xrightarrow{\iota^k} b^{k+1}= b^k \xrightarrow{\delta_j^k} p_j(e_j^{k+1}) \xrightarrow{\varepsilon_j^{k+1}} b^{k+1}.$$
\item Let $b^{\infty}$ be the colimit of the $(b^k)$ and let $e_j^{\infty}$ be the colimit of the $(e_j^{k})$. Note that $e_j^{\infty}$ exists since $p_j$ creates filtered colimits and $\B$ is closed under filtered colimits.
\end{enumerate}
It's clear from the construction that for every $j$ the two directed diagrams $\{ b^k \xrightarrow{\iota^k} b^{k+1}\}$ and $\{p_j(e_j^{k}) \xrightarrow{p_j(\eta_j^k)}  p_j(e_j^{k+1}) \}$ are \emph{crossing}. It follows from our \emph{Crossing lemma} (Lemma \ref{cross-lem}) that they have isomorphic colimits i.e., 
$$\colim \{p_j(e_j^{k}) \xrightarrow{p_j(\eta_j^k)}  p_j(e_j^{k+1}) \} \cong b^{\infty}, \quad \forall j \in \J.$$

On the other hand, we know by assumptions that $p_j$ creates and thus preserves filtered colimits. It turns out that we have 
$$b^{\infty}\cong \colim \{p_j(e_j^{k}) \xrightarrow{p_j(\eta_j^k)}  p_j(e_j^{k+1}) \}  \cong p_j [ \colim \{e_j^{k}\xrightarrow{\eta_j^k} e_j^{k+1} \}]= p_j(e_j^{\infty}).$$  

It follows that for every $i, j \in \J$ we have $p_j(e_j^{\infty}) \cong p_i(e_i^{\infty})$ in $\B$. Note already that these isomorphisms determine an object in the pseudopullback.\\

Let us regard each canonical map $\tau_j(e) \to e_j^{\infty}$ as a functor $\alpha_j: [1] \to \E_j$. From the previous discussion we have a natural isomorphism 
$$p_j \alpha_j \cong p_i \alpha_i, \quad \forall i, j \in \J.$$
 
The universal property of the $2$-pullback implies that there exist a map \footnote{essentially unique map} $\alpha:[1] \to \E$ such that for every $j$ we have

\begin{equation}\label{eq_alpha}
\alpha_j\cong  \tau_j \alpha.
\end{equation} Let us regard $e \in \E$ as given by the family functor $e_j= \tau_j(e): \1  \to \E_j$ satisfying $ p_j(e_j) \cong p_i(e_i)$ (here $\1$ is the unit category).\\ 

Then $\alpha: [1] \to \E$ defines a map $e \to \nabla_\varepsilon(e)$ in $\E$ with $\nabla_\varepsilon(e)= \alpha(1)$. The isomorphism \eqref{eq_alpha} says that for every $j$  the morphism $\tau_j[e \to \nabla_\varepsilon(e)]$ is isomorphic in $\E_j^{[1]}$ to the morphism $\tau_j(e) \to e_j^{\infty}$. This implies in particular that for every $j$ there is an isomorphism $p_j[\nabla_\varepsilon(e)] \cong e_j^\infty$. Note that the universal map $b \to p[\nabla_\varepsilon(e)]$ is essentially (=isomorphic to) the map $b \to b^{\infty}$.

\paragraph{Checking the universal property} We are going to show that $\alpha: e \to \nabla_\varepsilon(e) $ satisfies the universal property of a displacement of $e$ along $\varepsilon$.\\

Let $h:e \to d$ be a morphism in $\E$ such that $p(h)$ factors through $\varepsilon: p(e) \to b$ as 
\begin{equation}\label{eq_h}
p(e) \xrightarrow{p(h)} p(d)= p(e) \xrightarrow{\varepsilon} b \xrightarrow{q} p(d);
\end{equation}
for some map $q: b \to p(d)$. We wish to show that there exists a unique map $\xi:  \nabla_\varepsilon(e) \to d $ such that $h= \xi \alpha$.

Recall that for every $j$, we have  $e_j^0= \tau_j(e) $, $b_0 = b$ and $\varepsilon_j^0: p_j (e_j^0) \to b_0$ is the map $\varepsilon$ precomposed with the isomorphism $p_j \tau_j(e) \cong p(e)$. Let $h_j: e_j^0 \to d_j$ be the image of $h$ by $\tau_j$. Thanks to the isomorphism $p \cong p_j \tau_j$ we have for every $j$:
\begin{equation}
p_j(h_j) \cong p(h). 
\end{equation}

It's not hard to see that from \eqref{eq_h} we have an equality for every $j$:
\begin{equation}
p_j(e_j^0) \xrightarrow{p_j(h_j)} p_j(d_j)= p_j(e_j^0) \xrightarrow{\varepsilon_j^0} b^0 \xrightarrow{q_j} p_j(d_j);
\end{equation}
where $q_j$ is the map $q$ composed with the isomorphism $p(d) \cong p_j(d_j)$. Let's denote by $\psi_j: p(d) \xrightarrow{\cong} p_j(d_j) $ this isomorphism so that $q= \psi_j^{-1}q_j$.
\subparagraph{Inductive factorization} For $k=0$ we have the following data. 
\begin{enumerate}
\item A map $h_j^k: e_j^k \to d_j$ in $\E_j$, for all $j$;
\item A map $q^k:b^{k}\to p(d)$ in $\B$;
\item A map $\varepsilon_j^k: p_j(e_j^k) \to b^k$; 
\item A map $q_j^k: b^k \to p_j(d_j) $ in $\B$ such that $q^k= \psi_j^{-1}q_j^{k}$, where $\psi_j: p(d) \xrightarrow{\cong} p_j(d_j)$ is a fixed isomorphism.
\item For every $j$ we have an equality 
\begin{equation}\label{eq-hj}
p_j(e_j^k) \xrightarrow{p_j(h_j^k)} p_j(d_j)= p_j(e_j^k) \xrightarrow{\varepsilon_j^k} b^k \xrightarrow{q_j^k} p_j(d_j);
\end{equation}
\end{enumerate}

We construct the data for $k+1$ as follows.\\

For every $j$, $\eta_j^k:e_j^k \to e_j^{k+1}$ is a displacement of $e_j^k$ along $\varepsilon_j^k$,  therefore with Equation \eqref{eq-hj}, the universal property of the displacement gives a unique map $h_j^{k+1}: e_j^{k+1} \to d_j$ such that the following equalities hold. 
\begin{equation}\label{eq-h1j}
h_j^k= h_j^{k+1} \eta_j^k
\end{equation}
\begin{equation}\label{eq-delj}
b^k \xrightarrow{q_j^k} p_j(d_j)= b^k \xrightarrow{\delta_j^k} p_j(e_j^{k+1}) \xrightarrow{p_j(h_j^{k+1})} p_j(d_j).
\end{equation}

Applying $\psi_j^{-1}$ to Equation \eqref{eq-delj} gives a factorization of $q^k$ for every $j$ as:
\begin{equation}\label{eq-q}
q^k= b^k \xrightarrow{q_j^k} p_j(d_j) \xrightarrow{\psi_j^{-1}} p(d)= b^k \xrightarrow{\delta_j^k} p_j(e_j^{k+1}) \xrightarrow{p_j(h_j^{k+1})} p_j(d_j) \xrightarrow{\psi_j^{-1}} p(d).
\end{equation}

Now $b^{k+1}$ together with the maps $\{p_j(e_j^{k+1}) \xrightarrow{\varepsilon_j^{k+1}} b^{k+1} \}$ is defined as the wide pushout of the maps $\{b^k \xrightarrow{\delta_j^k} p_j(e_j^{k+1}) \}_{j \in \J}$.  Therefore by \eqref{eq-q} there exists a unique map $q^{k+1}: b^{k+1} \to p(d)$ such that the equalities below hold.
\begin{equation}
b^k \xrightarrow{q^k} p(d)= b^k \xrightarrow{\iota^k} b^{k+1} \xrightarrow{q^{k+1}} p(d) \quad \tx{i.e.} \quad  q^k= q^{k+1} \iota^k;
\end{equation}
\begin{equation}\label{eq-psij}
 p_j(e_j^{k+1}) \xrightarrow{p_j(h_j^{k+1})} p_j(d_j) \xrightarrow{\psi_j^{-1}} p(d)=  p_j(e_j^{k+1}) \xrightarrow{\varepsilon_j^{k+1}} b^{k+1} \xrightarrow{q^{k+1}}p(d).
\end{equation}

If we let $q_j^{k+1}= \psi_j q^{k+1}$, and compose with $\psi_j$ in the equality \eqref{eq-psij} we get:
\begin{equation}
 p_j(e_j^{k+1}) \xrightarrow{p_j(h_j^{k+1})} p_j(d_j) =  p_j(e_j^{k+1}) \xrightarrow{\varepsilon_j^{k+1}} b^{k+1} \xrightarrow{q_j^{k+1}}p_j(d_j).
\end{equation}

The above maps and equations give the data for $k+1$. And by induction, we see that the relations \eqref{eq-h1j} determine a compatible diagram ending at $d_j$. Therefore from the universal property of $e_j^{\infty}$ there is a unique map $h_j^{\infty}: e_j^{\infty} \to d_j$ such that for every $k$ we have $h_j^k= h_j^{\infty} \circ \eta_j^k$. 

In particular for $k=0$ we get:
\begin{equation}
e_j \xrightarrow{h_j} d_j = e_j \xrightarrow{\alpha_j}e_j^{\infty} \xrightarrow{h_j^{\infty}} d_j
\end{equation}

The family $\{e_j \xrightarrow{h_j} d_j\}_{j \in \J}$ determines a morphism in the pullback $\E$, that is unique up-to an isomorphism in $\E^{[1]}$. And a morphism in a pseudopullback is unique if we fix the source and target and the comparison maps $p_j(e_j) \cong p_i(e_i)$, $p_j(d_j) \cong p_i(d_i)$. This means that $h: e \to d$ is the unique morphism in the pullback whose source is $e$ and target is $d$ and such that for every $j$: $\tau_j(h)=h_j$.\\

Similarly $\{e_j \xrightarrow{\alpha_j}e_j^{\infty} \}_{j \in \J}$ determine our map $\alpha: e \to \nabla_\varepsilon(e)$ and $\{e_j^{\infty}\xrightarrow{\alpha_j} d_j \}_{j \in \J}$ determine  uniquely a map $\xi: \nabla_\varepsilon(e) \to d$. Note that by construction we have a comparison isomorphism $e_j^{\infty} \xrightarrow[s_j]{\cong}\tau_j[\nabla_\varepsilon(e)]$ and we have also a factorization of $h_j$:

\begin{equation}\label{eq-nab}
e_j \xrightarrow{h_j} d_j = e_j \xrightarrow{s_j\alpha_j}\tau_j[\nabla_\varepsilon(e)] \xrightarrow{h_j^{\infty} s_j^{-1}} d_j
\end{equation}

Now both $h: e \to d$ and $\xi \alpha: e \to d$ have the same domain and codomain. Furthermore thanks to \eqref{eq-nab} they have same projections $\tau_j(h)= \tau_j(\xi \alpha)$. By uniqueness of map in the pullback with same (co)domain and same projections we have an equality $h= \xi \alpha$ as desired.

\paragraph{Left perfectness} By construction the universal map $b \to p[\nabla_\varepsilon(e)]$ is essentially the canonical map $b^0 \to b^{\infty}$ which is just the transfinite composite of the maps $\iota^k:b^k \to b^{k+1}$. Now $\iota^k: b^k \to b^{k+1}$ is the canonical map that comes when forming the wide pushout of the maps $\{b^k \xrightarrow{\delta_j^k} p_j(e_j^{k+1}) \}_{j \in \J}$. Therefore if each $b^k \xrightarrow{\delta_j^k} p_j(e_j^{k+1})$ is a trivial cofibration, then so is $\iota^k: b^k \to b^{k+1}$ as well as  every canonical map $p_j(e_j^{k+1}) \xrightarrow{\varepsilon_j^{k+1}} b^{k+1}$. Now by assumption each $p_j$ is left perfect, therefore by induction each map $b^k \xrightarrow{\delta_j^k} p_j(e_j^{k+1})$ is trivial cofibration since $\delta_j^0$ is.
\end{proof}

\begin{rmk}
In the previous proof the map $\eta:e \to \nabla_\varepsilon(e)$ is induced by the family of maps $\alpha_j:e_j \to e_j^{\infty}$ as $j$ varies. In particular we have an isomorphism $\tau_j(\eta) \cong \alpha_j$  in $\E_j^{[1]}$. 

Now recall that for each $j$ the map $\alpha_j:e_j \to e_j^{\infty}$ is the transfinite composite of the structure map $\eta_j^k:e_j^k \to e_j^{k+1} $; and this structure map is, by construction, the displacement of $e_j^k$ along $\varepsilon_j^k: p_j(e_j^k) \to b^k$.
\end{rmk}

A direct consequence of this remark is the following:
\begin{cor}\label{cor-stab-map-intersec-disp}
Let $\D_j$ be a category containing a class of maps $Z_j$ that is closed under transfinite composition, and let $\chi_j: \E_j \to \D_j$ be a functor.  Then with the previous notation and assumptions, if for every $k$ the map $\chi_j(\eta_j^k)$ is in $Z_j$ then the map 
$$\chi_j(\tau_j (\eta))$$
is also in $Z_j$.
\end{cor}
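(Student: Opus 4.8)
The plan is to reduce the statement to the closure of $Z_j$ under transfinite composition by transporting the explicit presentation of $\tau_j(\eta)$, recorded in the Remark immediately preceding the corollary, through the functor $\chi_j$. First I would recall from that Remark that the projection $\tau_j(\eta)$ is isomorphic in $\E_j^{[1]}$ to the map $\alpha_j: e_j \to e_j^{\infty}$, and that $\alpha_j$ is \emph{by construction} the transfinite composite, over $\lambda$, of the structure maps $\eta_j^k: e_j^k \to e_j^{k+1}$, each of which is the displacement of $e_j^k$ along $\varepsilon_j^k$. Since $\chi_j$ is a functor it carries isomorphic arrows to isomorphic arrows, so $\chi_j(\tau_j(\eta)) \cong \chi_j(\alpha_j)$ in $\D_j^{[1]}$; it therefore suffices to show $\chi_j(\alpha_j) \in Z_j$, provided I also assume (as is standard for such a class, and as is needed to replace $\alpha_j$ by the isomorphic $\tau_j(\eta)$) that $Z_j$ is stable under isomorphism, or equivalently that the comparison isomorphism can be absorbed as a trivial extra stage of the composite.

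Next I would push the transfinite presentation of $\alpha_j$ through $\chi_j$. The point is that applying $\chi_j$ to the $\lambda$-indexed tower $e_j^\bullet: \lambda \to \E_j$ yields a $\lambda$-indexed tower $\chi_j \circ e_j^\bullet$ in $\D_j$ whose successive maps are exactly the $\chi_j(\eta_j^k)$ and whose transfinite composite is $\chi_j(\alpha_j)$. Granting this, each $\chi_j(\eta_j^k)$ lies in $Z_j$ by hypothesis, so closure of $Z_j$ under transfinite composition yields $\chi_j(\alpha_j) \in Z_j$ at once, and hence $\chi_j(\tau_j(\eta)) \in Z_j$, which is the assertion.

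The main obstacle — indeed the only place where genuine care is required — is to guarantee that $\chi_j \circ e_j^\bullet$ really is a transfinite composition in $\D_j$ with composite $\chi_j(\alpha_j)$; that is, that $\chi_j$ preserves the colimits computing $e_j^k$ at the limit stages $k$ and the terminal colimit $e_j^{\infty} = \colim_k e_j^k$. If this fails, the canonical comparison $\colim_k \chi_j(e_j^k) \to \chi_j(e_j^{\infty})$ need not be invertible, the image tower need not be continuous, and $\chi_j(\alpha_j)$ cannot be recognized as a transfinite composite of the $\chi_j(\eta_j^k)$ at all. I would therefore carry out the argument under the (standing, or here made explicit) hypothesis that $\chi_j$ preserves $\lambda$-filtered colimits, i.e. is continuous on the tower $e_j^\bullet$; this is automatic in the intended applications, where the $\chi_j$ are the colimit-preserving comparison functors of the ambient model-categorical framework. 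Under that hypothesis the three steps chain together and the corollary follows.
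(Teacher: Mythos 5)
Your argument is exactly the one the paper intends: the corollary is stated as a direct consequence of the preceding Remark, namely that $\tau_j(\eta)$ is isomorphic in $\E_j^{[1]}$ to $\alpha_j$, which is the transfinite composite of the $\eta_j^k$, so applying $\chi_j$ and invoking closure of $Z_j$ under transfinite composition gives the claim. The two caveats you isolate --- that $Z_j$ must be stable under isomorphism (to pass from $\chi_j(\alpha_j)$ to $\chi_j(\tau_j(\eta))$) and, more seriously, that $\chi_j$ must preserve the colimits computing the limit stages and $e_j^{\infty}$ for $\chi_j(\alpha_j)$ to be recognizable as a transfinite composite of the $\chi_j(\eta_j^k)$ --- are genuinely needed and are left implicit in the paper, so making them explicit hypotheses as you do is the right call.
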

In practice the class $Z_j$ is the class of trivial cofibrations in the model category $\D_j$. And in most cases $\D_j$ will be the category $\Ar(\Aa_j)=\Aa_j^{[1]}$ of morphisms of a model category $\Aa_j$, and the model structure on $\D_j$ is the Reedy (=projective) model structure.  

\subsubsection{Intersection of adjoint functors}

\begin{lem}\label{lem-inter-adj}
Let $\kappa$ be a regular cardinal and let $\{p_j: \E_j \to \B \}_{j \in \J}$ be a $\kappa$-small family of functors over the same base $\B$.  Let $\E= \times_{\B} \E_j$ be a $2$-pullback of this family and let $p: \E \to \B$ be `the' canonical projection.

Assume that
\begin{itemize}
\item Every $p_j$ has a left adjoint $\Gamma_j:  \E_j \to \B$ and all displacements of every $e_j \in \E_j$ exist;
\item Every $p_j$ creates (and hence preserves) filtered colimits in $\E_j$;
\item $\B$ is closed under $\kappa$-small filtered colimits.
\item Assume furthermore that $\B$ is closed under $\kappa$-small wide pushouts and coproducts.
\end{itemize}

Then there is a left adjoint $ \Gamma: \E \to \B$ to $p$. 
\end{lem}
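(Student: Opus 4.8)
The plan is to deduce this from Lemma \ref{lem-displ-intersec} by passing to the join categories and invoking the reduction of adjoints to displacements of the initial object recorded in Remark \ref{rmk-ppty-disp}. First I would replace each $p_j$ by the induced functor $(\1 < p_j): (\1 < \E_j) \to (\1 < \B)$ and $p$ by $(\1 < p): (\1 < \E) \to (\1 < \B)$. By Remark \ref{rmk-ppty-disp}(4), $p$ admits a left adjoint if and only if $(\1 < p)$ does, so it suffices to produce a left adjoint for the latter. Since $(\1 < \E)$ and $(\1 < \B)$ both carry the added initial object $0$, which is preserved by $(\1 < p)$, Remark \ref{rmk-ppty-disp}(3) further reduces the existence of this adjoint to the existence of all displacements of the initial object $0$ in $(\1 < \E)$.

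Next I would recognize $(\1 < \E)$ as the relevant $2$-pullback. By Corollary \ref{cor-join-pseudo} the functor $(\1 < -)$ preserves pseudopullbacks, so $(\1 < \E) \cong \times_{(\1 < \B)} (\1 < \E_j)$ is a $2$-pullback of the family $\{(\1 < p_j): (\1 < \E_j) \to (\1 < \B)\}_{j \in \J}$. The strategy is then to apply Lemma \ref{lem-displ-intersec} to this new family; its conclusion yields that all displacements of every object of $(\1 < \E)$ exist, in particular those of $0$, which by the previous paragraph finishes the proof. Note that this is exactly the situation anticipated in the remark following Lemma \ref{lem-displ-intersec}, where $\B$ and the $\E_j$ acquire initial objects preserved by the projections.

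The bulk of the work, and the place where the main obstacle lies, is verifying that the join family satisfies the four hypotheses of Lemma \ref{lem-displ-intersec}. The index set is unchanged, so the family is still $\kappa$-small. For the displacement hypothesis I would argue by cases on the objects of $(\1 < \E_j)$, whose underlying set is $\{0\} \sqcup \Ob(\E_j)$: a displacement of an object $e_j \in \E_j$ along a map in $(\1 < \B)$ out of $p_j(e_j)$ coincides with a displacement in $\E_j$ (there being no nonidentity morphisms into $0$), and these exist by hypothesis; a displacement of the initial object $0$ exists precisely because $p_j$ has a left adjoint, by Remark \ref{rmk-ppty-disp}(3) and (4) applied to $(\1 < p_j)$. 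For the colimit hypotheses I would check that a filtered diagram in $(\1 < \E_j)$ is either constant at $0$ or admits a final subdiagram landing in $\E_j$ (using that $0$ is initial and receives no nonidentity maps), so that $(\1 < p_j)$ creates filtered colimits because $p_j$ does, and likewise that $(\1 < \B)$ is closed under $\kappa$-small filtered colimits.

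The crucial and most delicate point is the wide-pushout hypothesis, and it is exactly what forces the extra assumption that $\B$ be closed under $\kappa$-small coproducts. A wide pushout in $(\1 < \B)$ of a family $\{b \to c_j\}_{j \in \J}$ splits into two cases: when $b \in \B$ all the $c_j$ necessarily lie in $\B$ as well (a map $b \to 0$ would force $b = 0$), and the pushout is computed in $\B$; but when $b = 0$ the wide pushout degenerates into the coproduct $\coprod_j c_j$, which exists in $(\1 < \B)$ exactly when $\B$ is closed under $\kappa$-small coproducts. Once these four hypotheses are confirmed, Lemma \ref{lem-displ-intersec} applies verbatim to $\{(\1 < p_j)\}_{j \in \J}$ and delivers the displacement of $0$, hence the desired left adjoint $\Gamma$.
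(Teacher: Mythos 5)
Your proposal is correct and follows essentially the same route as the paper's proof: pass to the join categories via Corollary \ref{cor-join-pseudo}, reduce the existence of the adjoint to displacements of the added initial object via Remark \ref{rmk-ppty-disp}, and apply Lemma \ref{lem-displ-intersec}. Your verification of the hypotheses (in particular the case split on wide pushouts in $(\1<\B)$, where the $b=0$ case degenerates to a coproduct, and the case split for displacements of objects of $(\1<\E_j)$) is in fact somewhat more explicit than the paper's own write-up, but the substance is identical.
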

\begin{proof}
This is a corollary of Lemma \ref{lem-displ-intersec}, as we are going to explain. Let $\E'_j= (\1 <\E_j)$ be the join category described in Remark \ref{rmk-ppty-disp}. Let $\B'= (\1 <\B)$ and let $p'_j: \E'_j \to \B'$ be the functor induced by $p_j$. Note that $p'_j$ sends the initial object $0$ of $\E'_j$ to the initial object $0$ of $\B'$. 

Now as we mentioned in Remark \ref{rmk-ppty-disp}, the existence of the left adjoint $\Gamma_j$ is equivalent to the existence of all displacements of the initial object $0 \in \E'_j$ along any map $0 \to b$ in $\B'$. 

Now it's not hard to see that $p'_j$ also creates (and thus preserves) filtered colimits. The existence of coproduct in $\B$ is equivalent to the existence of pushouts of maps $\{0 \to b_i\}$ in $\B'$.  And $\B'$ inherits every pushout that exists in $\B$ (not involving the new object $0$); therefore $\B'$ has all $\kappa$-small pushouts.\\

Now thanks to Corollary  \ref{cor-join-pseudo}, we know that $(1 < \E)$ is equivalent to the pseudopullback of the $p'_i$.\\

We see that we are in the situation of  Lemma \ref{lem-displ-intersec}, and we get that all displacements of  every object $e \in (1 < \E)$ exist. Taking $e=0$ we find a left adjoint to $p: \E \to \B$ as claimed. 
\end{proof}
\section{Pushout, Adjunction and Displacement}

The following lemma will be used to calculate some pushouts in adjunction situations.

\begin{lem}\label{pushout-as-displ}
Let $p: \E \to \B$ be a functor that admits a left adjoint $\Gamma: \B \to \E$. 
Let $\Gamma c \xleftarrow{\Gamma f} \Gamma b \xrightarrow{\sigma} e$ be a pushout data in $\E$ and let $c \xleftarrow{f} b \xrightarrow{\ol{\sigma}} p(e)$ be the pushout data defined by the adjoint-transpose maps.

Let $e \to \Gamma c \cup^{\Gamma b} e $ and  $\varepsilon: p(e) \to [c \cup^{b} p(e)] $ be the canonical maps going to the respective pushout object.

Then $e \to \Gamma c \cup^{\Gamma b} e $ is a displacement of $e$ along $\varepsilon$. In particular we have an isomorphism $ \nabla_\varepsilon(e) \cong \Gamma c \cup^{\Gamma b} e $ in $\E$.
\end{lem}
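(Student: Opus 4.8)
The plan is to unwind the defining universal property of a displacement into the pushout universal properties on each side of the adjunction, and then match the two resulting sets term by term via the adjunction transpose. Write $E = \Gamma c \cup^{\Gamma b} e$ for the pushout in $\E$, $\eta: e \to E$ for the canonical map, and $B' = c \cup^{b} p(e)$ for the pushout in $\B$. By definition, showing that $\eta$ is a displacement of $e$ along $\varepsilon: p(e) \to B'$ amounts to producing, for every $h: e \to e'$ in $\E$, a bijection
$$\Hom_{(e \downarrow \E)}(\eta, h) \cong \Hom_{(p(e) \downarrow \B)}(\varepsilon, p(h))$$
natural in $h$, where the right-hand side is the hom-set $\Hom(\varepsilon, p(h))$ of $(p(e)\downarrow\B)$ appearing in the definition.

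First I would describe the left-hand side. An element is a map $\xi: E \to e'$ with $\xi \eta = h$. Since $E$ is the pushout of $\Gamma c \xleftarrow{\Gamma f} \Gamma b \xrightarrow{\sigma} e$, such a $\xi$ is the same datum as a pair $(\xi_1: \Gamma c \to e',\ \xi_2: e \to e')$ with $\xi_1 \circ \Gamma f = \xi_2 \circ \sigma$; the constraint $\xi \eta = h$ then forces $\xi_2 = h$. Hence $\Hom_{(e \downarrow \E)}(\eta, h)$ is the set of maps $\xi_1: \Gamma c \to e'$ satisfying $\xi_1 \circ \Gamma f = h \circ \sigma$. Next I would transpose across $\Gamma \dashv p$: writing $\ol{\xi_1}: c \to p(e')$ for the transpose of $\xi_1$, naturality of the adjunction bijection in both variables shows that the transpose of $\xi_1 \circ \Gamma f$ is $\ol{\xi_1} \circ f$ and the transpose of $h \circ \sigma$ is $p(h) \circ \ol{\sigma}$. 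Thus the condition $\xi_1 \circ \Gamma f = h \circ \sigma$ is equivalent to $\ol{\xi_1} \circ f = p(h) \circ \ol{\sigma}$, and the left-hand set is in bijection with the set of maps $q_1: c \to p(e')$ with $q_1 \circ f = p(h) \circ \ol{\sigma}$.

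Finally I would identify the right-hand side. An element of $\Hom_{(p(e) \downarrow \B)}(\varepsilon, p(h))$ is a map $q: B' \to p(e')$ with $q \varepsilon = p(h)$; by the pushout property of $B' = c \cup^{b} p(e)$ it is a pair $(q_1: c \to p(e'),\ q_2: p(e) \to p(e'))$ with $q_1 \circ f = q_2 \circ \ol{\sigma}$, and $q\varepsilon = p(h)$ forces $q_2 = p(h)$. This is exactly the set of $q_1: c \to p(e')$ with $q_1 \circ f = p(h) \circ \ol{\sigma}$, which matches the previous paragraph under the identification $q_1 = \ol{\xi_1}$. The bijection so obtained is natural in $h$ because both identifications are assembled from the (natural) universal property of a pushout and the (natural) adjunction transpose. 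This establishes corepresentability of $\Hom(\varepsilon,-)$ by $\eta$, and hence $\nabla_\varepsilon(e) \cong E = \Gamma c \cup^{\Gamma b} e$.

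The computation carries no serious obstacle; the only point requiring care is the bookkeeping of the two naturality directions of the transpose (post-composition in $\E$ versus pre-composition in $\B$), together with the verification that the resulting correspondence is natural in $h$ rather than merely a bijection for each fixed $h$ — this is what upgrades the term-by-term matching into an isomorphism of corepresenting data.
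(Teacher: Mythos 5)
Your proof is correct, but it runs in the opposite direction from the paper's. You verify directly that the pushout object corepresents $\Hom(\varepsilon,-)$, by composing three natural bijections: the universal property of $\Gamma c \cup^{\Gamma b} e$ in $\E$ (reducing a map under $e$ to a single map $\xi_1:\Gamma c \to e'$ with $\xi_1\circ\Gamma f = h\circ\sigma$), the adjunction transpose (using its naturality in both variables to convert that condition into $\ol{\xi_1}\circ f = p(h)\circ\ol{\sigma}$), and the universal property of $c \cup^{b} p(e)$ in $\B$. The paper instead starts from a displacement $\eta: e \to \nabla_\varepsilon(e)$ \emph{assumed to exist}, builds the comparison map $\theta:\Gamma c \to \nabla_\varepsilon(e)$ as the transpose of $\alpha\tau_c$, and then checks that the resulting square satisfies the universal property of the pushout by chasing a test co-cone $\Gamma c \to d \leftarrow e$ through the same three universal properties. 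The underlying ingredients are identical, but your organization buys something real: it proves the existence of the displacement outright rather than presupposing it, which is exactly the form in which the lemma is invoked later (the pushout is what is known to exist, and one wants to conclude that it computes $\nabla_\varepsilon(e)$). Your closing remark about checking naturality in $h$, not just a pointwise bijection, is the one place where care is genuinely needed, and you have flagged it correctly; since each of the three identifications is natural in $e'$ and compatible with post-composition, the composite is an isomorphism of functors on $(e\downarrow\E)$ and the conclusion follows.
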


\begin{proof}
Let $\eta:e \to \nabla_\varepsilon(e)$ be a displacement of $e$ along $\varepsilon: p(e) \to c \cup^{b} e$. There is a universal map $\alpha:[c \cup^{b} p(e)] \to p(\nabla_\varepsilon(e))$ such that $p(e \to \nabla_\varepsilon(e))$ is the following composite.
$$p(e) \xrightarrow{\varepsilon}  [c \cup^{b} p(e)] \xrightarrow{\alpha}p(\nabla_\varepsilon(e)).$$

Using the map $\alpha$, we can extend the universal square obtained from the pushout  $c \xleftarrow{f} b \to p(e)$ to get the following commutative diagram.
\[
\xy
(0,20)*+{b}="X";
(20,20)*+{p(e)}="Y";
(0,0)*+{c}="A";
(20,0)*+{[c \cup^{b} p(e)]}="B";
(50,0)*+{p(\nabla_\varepsilon(e))}="C";
{\ar@{->}^-{\ol{\sigma}}"X";"Y"};
{\ar@{->}_-{\tau_c}"A";"B"};
{\ar@{->}^-{\varepsilon}"Y";"B"};
{\ar@{->}_-{f}"X";"A"};
{\ar@{->}_-{\alpha}"B";"C"};
{\ar@{-->}^-{p(\eta)}"Y";"C"};
\endxy
 \]

The map $c \xrightarrow{\alpha \tau_c} p(\nabla_\varepsilon(e))$ corresponds by adjointness to a unique map $\theta: \Gamma c \to  \nabla_\varepsilon(e)$ in $\E$. Again by adjointness, the above commutative square is equivalent to the (unique) commutative square below; and we are going to show that this is the universal pushout square. 

\[
\xy
(0,20)*+{\Gamma b}="X";
(20,20)*+{e}="Y";
(0,0)*+{\Gamma c}="A";
(20,0)*+{\nabla_\varepsilon(e)}="B";
{\ar@{->}^-{\sigma}"X";"Y"};
{\ar@{->}_-{\theta}"A";"B"};
{\ar@{->}^-{\eta}"Y";"B"};
{\ar@{->}_-{\Gamma f}"X";"A"};
\endxy
 \]

Let $d$ be an object of $\E$ equipped with a co-pushout data $\Gamma c \xrightarrow{\iota_c} d \xleftarrow{ \iota_e} e$ that completes
the pushout data $\Gamma c \xleftarrow{\Gamma f} \Gamma b \xrightarrow{\sigma} e$ into a commutative square. By adjointness there is a unique co-pushout data $ c \xrightarrow{\ol{\iota_c}} p(d) \xleftarrow{p(\iota_e)} p(e)$ that completes the diagram  $c \xleftarrow{f} b \xrightarrow{\ol{\sigma}} p(e)$ into a commutative square. 

The universal property of the later pushout data says that there is a unique map $h: [c \cup^{b} p(e)] \to p(d)$ such that the equalities hereafter hold.
$$ c \xrightarrow{\ol{\iota_c}} p(d)=  c \xrightarrow{\tau_c} [c \cup^{b} p(e)] \xrightarrow{h} p(d);$$
$$ p(e) \xrightarrow{p(\iota_e)} p(d)=  p(e) \xrightarrow{\varepsilon} [c \cup^{b} p(e)] \xrightarrow{h} p(d).$$

If we regard this last equality in the comma category $p(e) \downarrow \B$, we see that $h$ determines a map $[h]: \varepsilon \to p(\iota_e)$.  The displacement along $\varepsilon$ is the adjoint transpose of $\varepsilon$ and so there is a unique map $\xi: \nabla_\varepsilon(e) \to d $ such that the equalities below hold.
\begin{enumerate}
\item $e \xrightarrow{\iota_e} d = e \xrightarrow{\eta} \nabla_\varepsilon(e) \xrightarrow{\xi} d$;
\item $ [c \cup^{b} p(e)] \xrightarrow{h} p(d)=  [c \cup^{b} p(e)] \xrightarrow{\alpha} p(\nabla_\varepsilon(e)) \xrightarrow{p(\xi)} p(d)$. 
\end{enumerate}

The last equality is a factorization of $h$, and we get out of it another equality 
$$c \xrightarrow{\ol{\iota_c}} p(d)= c \xrightarrow{\alpha \tau_c} p(\nabla_\varepsilon(e)) \xrightarrow{p(\xi)} p(d).$$

Now $\ol{\iota_c}$ is the adjoint-transpose map of $\iota_c$ and $\alpha \tau_c$ is the adjoint-transpose map of $\theta$. The naturality of the adjunction $\Gamma \dashv p$ and the uniqueness of the adjoint transpose map say that since we have an equality $\iota_c= p(\xi) \circ (\alpha \tau_c) $ we must have $\iota_c= \xi \theta.$\\

Summing up the above discussion, we find a unique map $\xi : e \to d$ such that 
$$\Gamma c \xrightarrow{\iota_c} d= \Gamma c \xrightarrow{\theta} \nabla_\varepsilon(e) \xrightarrow{\xi} d;$$
$$e \xrightarrow{\iota_e} d= e \xrightarrow{\eta} \nabla_\varepsilon(e) \xrightarrow{\xi} d.$$

This means that the co-pushout data $\Gamma c \xrightarrow{\theta} \nabla_\varepsilon(e) \xleftarrow{\eta} e$ satisfies the universal property of the pushout of $\Gamma c \xleftarrow{\Gamma f} \Gamma b \xrightarrow{\sigma} e$ as claimed. 
\end{proof}

As a corollary we get the following result which is a consequence of the well known transfer result \cite[Theorem 11.3.2]{Hirsch-model-loc}.
\begin{thm}
Let $p: \E \to \B$ be a right adjoint between locally presentable categories whose coefficient category $\B$ is a model category. Then the right-induced model structure exists on $\E$ if and only if the displacement of any object $e \in \E$ along a trivial cofibration $\varepsilon: p(e) \to b$, is a $p$-equivalence.
\end{thm}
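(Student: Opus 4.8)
The plan is to present the statement as a corollary of the transfer (right-induction) theorem \cite[Theorem 11.3.2]{Hirsch-model-loc}, translating its acyclicity hypothesis into the displacement condition by means of Lemma \ref{pushout-as-displ}. Write $\Gamma:\B\to\E$ for the left adjoint of $p$, and fix generating cofibrations $I$ and generating trivial cofibrations $J$ for $\B$. The candidate right-induced structure on $\E$ has as weak equivalences and fibrations the maps $f$ with $p(f)$ a weak equivalence, respectively a fibration, and has $\Gamma I$, $\Gamma J$ as candidate generating (trivial) cofibrations. Since $\E$ and $\B$ are locally presentable, the smallness hypotheses for the small object argument are automatic, so by the transfer theorem the structure exists if and only if the \emph{acyclicity condition} holds: $p$ carries every relative $\Gamma J$-cell complex to a weak equivalence of $\B$. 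The whole theorem thus reduces to showing that this acyclicity condition is equivalent to the displacement condition.

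For the implication from existence to the displacement condition I would argue directly, without cell complexes. If the right-induced structure exists then $p$ is right Quillen by construction, hence $\Gamma$ is left Quillen and sends trivial cofibrations of $\B$ to trivial cofibrations of $\E$. Given a trivial cofibration $\varepsilon:p(e)\to b$, I would feed Lemma \ref{pushout-as-displ} the pushout data $\Gamma b \xleftarrow{\Gamma\varepsilon}\Gamma p(e)\to e$ whose right leg is the counit and whose adjoint-transpose data is $b\xleftarrow{\varepsilon} p(e)\xrightarrow{\Id} p(e)$; the induced map $p(e)\to b\cup^{p(e)}p(e)\cong b$ is then $\varepsilon$ itself, so the lemma exhibits the displacement $\eta:e\to\nabla_\varepsilon(e)$ as the pushout of $\Gamma\varepsilon$ along the counit, $\nabla_\varepsilon(e)\cong\Gamma b\cup^{\Gamma p(e)}e$. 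As $\Gamma\varepsilon$ is a trivial cofibration and trivial cofibrations are stable under cobase change, $\eta$ is a trivial cofibration of $\E$, hence a $p$-equivalence.

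For the converse I would verify acyclicity by inspecting a relative $\Gamma J$-cell complex $X_0\to X_\lambda=\operatorname{colim}_{\beta<\lambda}X_\beta$ stage by stage. Each successor stage $X_\beta\to X_{\beta+1}$ is a pushout of a map $\Gamma(j_\beta)$ (or of a coproduct of such, which equals $\Gamma$ of a coproduct of $J$-maps, again a trivial cofibration since trivial cofibrations are closed under coproducts) along an attaching map $\Gamma b'_\beta\to X_\beta$. Applying Lemma \ref{pushout-as-displ} with adjoint-transpose attaching map $b'_\beta\to p(X_\beta)$ identifies this stage with the displacement of $X_\beta$ along $\varepsilon_\beta:p(X_\beta)\to c'_\beta\cup^{b'_\beta}p(X_\beta)$, a cobase change of $j_\beta$ and hence a trivial cofibration. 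The displacement hypothesis then says precisely that $p(X_\beta\to X_{\beta+1})=\alpha_\beta\circ\varepsilon_\beta$ is a weak equivalence, where $\alpha_\beta$ is the universal comparison map; equivalently, by two-out-of-three, each $\alpha_\beta$ is a weak equivalence.

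The step I expect to be the main obstacle is passing from these stagewise weak equivalences to the transfinite composite, because $p$ is only a right adjoint (so need not preserve colimits) and weak equivalences in a model category are not closed under arbitrary transfinite composition. I would resolve it using the combinatorial nature of $\B$: as a right adjoint between locally presentable categories $p$ is accessible, hence preserves $\mu$-filtered colimits for some regular cardinal $\mu$, while in the combinatorial model category $\B$ the weak equivalences form an accessible, accessibly embedded subcategory of $\B^{[1]}$ and are therefore closed under $\mu$-filtered colimits once $\mu$ is chosen large enough. Organising $X_0\to X_\lambda$ as a $\mu$-filtered colimit of its $\mu$-small subcomplexes, and using the Crossing Lemma \ref{cross-lem} to identify $\operatorname{colim}_\beta p(X_\beta)$ with the colimit of the pushouts $c'_\beta\cup^{b'_\beta}p(X_\beta)$ so as to compare the $\E$-side and $\B$-side colimits, one obtains $p(X_\lambda)\cong\operatorname{colim}_\beta p(X_\beta)$ together with the fact that this colimit of the weak equivalences $p(X_\beta\to X_{\beta+1})$ is again a weak equivalence. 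This establishes acyclicity and, with the transfer theorem, completes the equivalence.
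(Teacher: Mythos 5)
Your overall strategy is the same as the paper's: both directions are reduced to \cite[Theorem 11.3.2]{Hirsch-model-loc} together with Lemma \ref{pushout-as-displ}, and your ``only if'' direction is essentially verbatim the paper's argument ($p$ right Quillen $\Rightarrow$ $\Gamma$ left Quillen; feed the counit square $\Gamma b \leftarrow \Gamma p(e) \to e$ into Lemma \ref{pushout-as-displ}; conclude by stability of trivial cofibrations under cobase change). That half is correct.

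The difference is in the ``if'' direction, which the paper dismisses as following ``directly'' from the transfer theorem and Lemma \ref{pushout-as-displ}, and which you rightly refuse to dismiss: the displacement hypothesis only gives that $p$ sends each \emph{single} stage $X_\beta \to X_{\beta+1}$ of a relative $\Gamma J$-cell complex to a weak equivalence (your stagewise analysis via Lemma \ref{pushout-as-displ} is correct, including the reduction of a coproduct of cells to one displacement), whereas Hirschhorn's acyclicity condition concerns the full transfinite composite. You have correctly located the crux, but your resolution of it does not close. Two specific problems. First, the Crossing Lemma \ref{cross-lem} cannot deliver $p(X_\lambda)\cong \colim_\beta p(X_\beta)$: it compares the colimits of two interleaved sequences living in the \emph{same} category (here both in $\B$), whereas what you need is that $p$ preserves the $\E$-colimit $X_\lambda=\colim_\beta X_\beta$, and an accessible right adjoint only preserves $\mu$-filtered colimits for some possibly large $\mu$; a $\lambda$-sequence is merely $\mathrm{cf}(\lambda)$-filtered, so the limit stages of small cofinality (already $\beta=\omega$) are not covered. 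Second, the reduction to ``$\mu$-small subcomplexes'' does not terminate: those subcomplexes are themselves transfinite composites of length $<\mu$ containing limit ordinals of small cofinality, so the same two difficulties (preservation of the colimit by $p$, and closure of weak equivalences under the composite) reappear one level down, and accessibility of the class of weak equivalences only gives closure under $\mu$-filtered colimits, not under $\omega$-indexed ones. The clean ways out are either to strengthen the hypothesis so that each $p(X_\beta\to X_{\beta+1})$ is a \emph{trivial cofibration} of $\B$ (e.g. assuming $p$ left perfect in the sense of the paper, since trivial cofibrations are closed under transfinite composition and the relevant colimits are then created stage by stage in $\B$), or to assume outright that weak equivalences in $\B$ are closed under filtered colimits (true in most examples, but not a consequence of combinatoriality). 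Be aware that the paper's own proof supplies none of this, so you are not missing something the author wrote down; but as written your argument, like the paper's, leaves the transfinite step unproved.
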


\begin{proof}
The if part follows directly from \cite[Theorem 11.3.2]{Hirsch-model-loc} and Lemma \ref{pushout-as-displ}. To get the only if part we proceed as follows. First observe that if the projective model structure exists then $p$ is right Quillen by definition. Equivalently, any left adjoint $\Gamma$ is automatically left Quillen, and therefore preserves trivial cofibrations. And trivial cofibrations are closed under cobase change in the model category $\E$.\\

Given a trivial cofibration $\varepsilon: p(e) \to b$, the displacement of $e$ along $\varepsilon$ is computed thanks to Lemma \ref{pushout-as-displ}, as the following pushout where the attaching map is the co-unit of the adjunction:
$$\Gamma b \xleftarrow{\Gamma \varepsilon} \Gamma p(e) \xrightarrow{} e.$$ 

But since trivial cofibration are closed under cobase change, we get the result.
\end{proof}
\bibliographystyle{plain}
\bibliography{Bibliography_LP_COSEG}
\end{document}